\documentclass[a4paper,12pt,final]{amsart}
\usepackage{times,a4wide,mathrsfs}

\newcommand{\D}{\mathbb{D}}
\newcommand{\C}{\mathbb{C}}
\newcommand{\RR}{\mathbb{R}}
\newcommand{\lone}{\vert\hspace{-.5mm}\vert\hspace{-.5mm}\vert}
\newcommand{\rone}{\vert\hspace{-.5mm}\vert\hspace{-.5mm}\vert_1}
\newcommand{\Hi}{{\mathscr{H}}^\infty}
\newcommand{\im}{\operatorname{i}}

\newtheorem{theorem}{Theorem}
\newtheorem{lemma}{Lemma}
\newtheorem{corollary}{Corollary}

\begin{document}

\title[The Bohnenblust--Hille inequality is hypercontractive]{The
Bohnenblust--Hille inequality for \\ homogeneous polynomials is
hypercontractive}

\author[A. Defant]{Andreas Defant}
\address{Institut of mathematics, Carl von Ossietzky University, D-26111,
Oldenburg, Germany}
\email{defant@mathematik.uni-oldenburg.de}

\author[L. Frerick]{Leonhard Frerick}
\address{Fachbereich IV - Mathematik, Universit\"{a}t Trier, D-54294 Trier}
\email{frerick@uni-trier.de}

\author[J. Ortega-Cerd\`a]{Joaquim Ortega-Cerd\`a}
\address{Dept.\ Matem\`atica Aplicada i An\`alisi,
Universitat  de Barcelona, Gran Via 585, 08071 Barce- lona, Spain}
\email{jortega@ub.edu}

\author[M. Ouna\"{\i}es]{Myriam Ouna\"{\i}es}
\address{Institut de Recherche Math\'ematique Avanc\'ee,
Universit\'e de Strasbourg, 7 Rue Ren\'e Des\-car\-tes,
67084 Strasbourg CEDEX, France}
\email{ounaies@math.u-strasbg.fr}

\author[K. Seip]{Kristian Seip}
\address{Department of Mathematical Sciences,
Norwegian University of Science and  Technology, NO--7491
Trondheim, Norway} \email{seip@math.ntnu.no} \keywords{
Homogeneous polynomials, Bohnenblust--Hille inequality, Sidon constant, Bohr
radius, power series, Dirichlet polynomials, Dirichlet series}

\subjclass[2000]{32A05, 43A46}

\date{\today}

\thanks{The first author is supported by the German Research Foundation grant AOBJ:561427, 
the third author is supported by the project
MTM2008-05561-C02-01,
and the fifth author is supported by the Research Council of
Norway grant 185359/V30.} \maketitle

\begin{abstract}
The Bohnenblust--Hille inequality says that the
$\ell^{\frac{2m}{m+1}}$-norm of the coefficients of an
$m$-homogeneous polynomial $P$ on $\C^n$ is bounded by $\| P
\|_\infty$ times a constant independent of $n$,  where $\|\cdot
\|_\infty$ denotes the supremum norm on the polydisc $\D^n$. The
main result of this paper is that this inequality is
hypercontractive, i.e., the constant can be taken to be $C^m$ for
some $C>1$. Combining this improved version of the
Bohnenblust--Hille inequality with other results, we obtain the
following: The Bohr radius for the polydisc $\D^n$ behaves
asymptotically as $\sqrt{(\log n)/n}$ modulo a factor bounded away
from $0$ and infinity, and the Sidon constant for the set of
frequencies $\bigl\{ \log n:\ n \ \text{a positive integer}\ \le
N\bigr\}$ is $\sqrt{N}\exp\{(-1/\sqrt{2}+o(1))\sqrt{\log N\log\log
N}\}$ as $N\to \infty$.
\end{abstract}

\section{Introduction and statement of results}

In 1930, Littlewood \cite{Li30} proved the following, often
referred to as Littlewood's $4/3$-inequality: For every bilinear
form $B : \C^n \times  \C^n \rightarrow \mathbb{C}$  we have
\begin{equation*} \label{little}
\bigg( \sum_{i,j} | B(e^{(i)}, e^{(j)}) |^{4/3} \bigg)^{3/4} \leq \sqrt{2}
\sup_{z^{(1)},z^{(2)}\in \D^n}|B(z^{(1)},z^{(2)})|\,,
\end{equation*}
where $\D^n$ denotes the open unit polydisc in $\C^n$ and
$\{e^{(i)}\}_{i=1,\ldots,n}$ is the canonical base of $\C^n$. The exponent
$4/3$ is optimal, meaning that for smaller exponents it will not be
possible to replace $\sqrt{2}$ by a constant independent of $n$.
H.~Bohnenblust and E.~Hille immediately realized the importance of
this result, as well as the techniques used in its proof, for what
was known as Bohr's  absolute convergence problem: Determine the
maximal width $T$ of the vertical strip in which a Dirichlet series
$\sum_{n=1}^\infty a_n n^{-s}$ converges uniformly but not
absolutely. The problem was raised by H.~Bohr \cite{B13b} who in
1913 showed that $T \leq 1/2$. It remained a central problem in the
study of Dirichlet series until 1931, when Bohnenblust and Hille
\cite{BH31} in an ingenious way established that $T=1/2$.

A crucial ingredient in \cite{BH31} is an $m$-linear version of
Littlewood's $4/3$-inequality: For each
$m$ there is a constant $C_m \geq 1$ such that for every
$m$-linear form $B : \C^n \times \cdots \times \C^n \rightarrow \mathbb{C}$
we have
\begin{equation} \label{eq:Bohn-Hille}
\bigg( \sum_{i_1, \ldots,i_m } |B(e^{(i_{1})}, \ldots   ,
e^{(i_{m})})|^{\frac{2m}{m+1}} \bigg)^{\frac{m+1}{2m}} \leq C_m \sup_{z^{(i)}
\in \D^n}|B(z^{(1)}, \dots, z^{(m)})|\,,
\end{equation}
and  again the exponent $\frac{2m}{m+1}$ is optimal. Moreover, if
$C_m$ stands for the best constant, then the original proof gives
that  $C_m \leq m^{^\frac{m+1}{2m}} (\sqrt{2})^{m-1}$. This
inequality was long forgotten and rediscovered more than forty years
later by A.~Davie \cite{Da73} and S.~Kaijser \cite{Ka78}. The proofs
in \cite{Da73} and \cite{Ka78} are slightly different from the
original one and give the better estimate
\begin{equation}
\label{constant}
C_m \leq (\sqrt{2})^{m-1}\,.
\end{equation}

In order to solve Bohr's absolute convergence problem, Bohnenblust and Hille
needed a symmetric version of \eqref{eq:Bohn-Hille}. For this purpose, they in
fact invented polarization and deduced from \eqref{eq:Bohn-Hille} that for each
$m$ there is a constant $D_m\geq 1$ such for every $m$-homogeneous polynomial
$\sum_{|\alpha| = m} a_\alpha z^\alpha$ on $\C^n$
\begin{equation} \label{poly}
\big( \sum_{|\alpha| = m} |a_\alpha|^{\frac{2m}{m+1}} \big)^{\frac{m+1}{2m}}
\leq   D_m
 \sup_{z \in \D^n} \Bigl|\sum_{|\alpha| = m} a_\alpha z^\alpha\Bigr|\,;
\end{equation}
they showed again, through a highly nontrivial argument, that the exponent
$\frac{2m}{m+1}$ is best possible.

Let us assume that $D_m$ in \eqref{poly} is optimal. By an
estimate of L.~A.~Harris \cite{Harris75} for the polarization
constant of $\ell^\infty$, getting from \eqref{constant} to
\begin{equation*} \label{constantsym}
D_m \leq( \sqrt{2} )^{m-1} \frac{m^{\frac
m2}(m+1)^{\frac{m+1}{2}}}{2^{m} (m!)^{\frac{m+1}{2m}}}
\end{equation*}
is now quite straightforward; see e.~g. \cite[Section 4]{DeSe}.
Using Sawa's Khinchine-type inequality for Steinhaus variables,
H.~Queff{\'e}lec \cite[Theorem III-1]{Queffelec} obtained the
slightly better estimate
\begin{equation}\label{queffelec}
D_m \leq \big( \frac{2}{\sqrt{\pi}} \big)^{m-1} \frac{m^{\frac
m2}(m+1)^{\frac{m+1}{2}}}{2^{m} (m!)^{\frac{m+1}{2m}}}.
\end{equation}
Our main result is that the Bohnenblust--Hille inequality
\eqref{poly} is in fact hypercontractive, i.e.,
$D_m\le C^m$ for some $C\ge 1$:

\begin{theorem} \label{mainresult}
Let $m$ and $n$ be positive integers larger than $1$. Then we have
\begin{equation} \label{maininequality}
\bigl( \sum_{|\alpha| = m} |a_\alpha|^{\frac{2m}{m+1}} \bigr)^{\frac{m+1}{2m}}
\leq \Bigl(1+\frac 1{m-1}\Bigr)^{m-1}\sqrt{m} (\sqrt{2})^{m-1} \sup_{z \in \D^n}
\Bigl|\sum_{|\alpha| = m}
a_\alpha z^\alpha\Bigr|
\end{equation}
for every $m$-homogeneous polynomial $\sum_{|\alpha| = m} a_\alpha z^\alpha$ on
$\C^n$.
\end{theorem}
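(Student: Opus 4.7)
The plan is to deduce Theorem \ref{mainresult} from the multilinear Bohnenblust--Hille inequality \eqref{eq:Bohn-Hille} with the Davie--Kaijser constant \eqref{constant}, but in a way that bypasses the classical polarization step responsible for the bound \eqref{queffelec}. The obstruction in the classical route is the Harris polarization estimate $\|L_P\|_\infty\le (m^m/m!)\|P\|_\infty$ for the symmetric polar $m$-linear form $L_P$ of $P$, whose factor $m^m/m!\sim e^m/\sqrt{2\pi m}$ is intrinsically exponential in $m$ and destroys any hope of hypercontractivity.

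The first step is to apply \eqref{eq:Bohn-Hille} instead to a non-symmetric $m$-linear form that enumerates the coefficients $a_\alpha$ without any multinomial weight. A natural choice is the upper-triangular form
\begin{equation*}
B_P(z^{(1)},\ldots,z^{(m)})=\sum_{1\le i_1\le i_2\le\cdots\le i_m\le n}a_{\alpha(i_1,\ldots,i_m)}\,z^{(1)}_{i_1}z^{(2)}_{i_2}\cdots z^{(m)}_{i_m},
\end{equation*}
which satisfies $B_P(z,\ldots,z)=P(z)$ thanks to the bijection between weakly increasing $m$-tuples in $\{1,\ldots,n\}$ and multi-indices $\alpha$ with $|\alpha|=m$. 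Each $a_\alpha$ appears exactly once as a coefficient $B_P(e^{(i_1)},\ldots,e^{(i_m)})$, with no multinomial weight, so \eqref{eq:Bohn-Hille} combined with \eqref{constant} gives at once
\begin{equation*}
\Bigl(\sum_{|\alpha|=m}|a_\alpha|^{\frac{2m}{m+1}}\Bigr)^{\frac{m+1}{2m}}\le (\sqrt 2)^{m-1}\|B_P\|_\infty.
\end{equation*}
The theorem is then reduced to a new polarization-type estimate for $\|B_P\|_\infty$ in terms of $\|P\|_\infty$ with only subexponential overhead in $m$.

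The main obstacle is the sharp bound
\begin{equation*}
\|B_P\|_\infty\le\Bigl(1+\frac 1{m-1}\Bigr)^{m-1}\sqrt m\,\|P\|_\infty,
\end{equation*}
which is precisely the improvement over Harris's $m^m/m!$ that makes the theorem hypercontractive. The structure of the constant is instructive: the factor $(1+1/(m-1))^{m-1}\to e$ is the canonical output of optimizing a one-parameter bound at $t=1/(m-1)$, while $\sqrt m$ is the cost of one Cauchy--Schwarz step converting an $L^2$-mean on the torus $\mathbb T^n$ to a supremum. A natural plan is to parametrize $z^{(j)}=r z+s w_j$ with suitable scalars $r+s=1$ and free $w_j\in\D^n$, use the $m$-homogeneity of $P$ and Cauchy integrals in the $w_j$'s to extract $B_P(z^{(1)},\ldots,z^{(m)})$ as an average of rescaled evaluations of $P$ on $\D^n$, and then optimize in $r$ (the extremum lying at $r=1-1/m$, which produces $(m/(m-1))^{m-1}$). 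Establishing this polarization-free estimate, which replaces the global polarization identity underlying Harris's bound by a tighter, parameter-dependent reconstruction that respects the upper-triangular structure of $B_P$, is the technical heart of the proof; once in hand, combining with the displayed multilinear inequality above yields \eqref{maininequality}.
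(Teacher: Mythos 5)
Your reduction of the theorem to the multilinear Bohnenblust--Hille inequality applied to the upper-triangular extension $B_P$ is clean as far as it goes, but the estimate you identify as ``the technical heart,'' namely
\[
\|B_P\|_\infty \le \Bigl(1+\frac{1}{m-1}\Bigr)^{m-1}\sqrt{m}\,\|P\|_\infty,
\]
is not merely left unproved --- it is false. Any bound of this type must be uniform in $n$, but the ratio $\|B_P\|_\infty/\|P\|_\infty$ for the upper-triangular extension is unbounded as $n\to\infty$ already for $m=2$. Take $P(z)=\sum_{1\le j<k\le n}\epsilon_{jk}z_jz_k$ with independent random signs $\epsilon_{jk}=\pm1$. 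By the Salem--Zygmund inequality, with high probability $\|P\|_\infty\le C\,n\sqrt{\log n}$. On the other hand, $B_P(w,z)=\sum_{j<k}\epsilon_{jk}w_jz_k$; taking $z=(1,\ldots,1)$ gives $B_P(w,z)=\sum_{j}w_js_j$ with $s_j=\sum_{k>j}\epsilon_{jk}$. For $j\le n/2$ the variable $s_j$ is a sum of at least $n/2$ independent signs, so $|s_j|$ is typically of order $\sqrt n$, and since the $s_j$ are independent, choosing $w_j=\overline{s_j}/|s_j|$ yields $\|B_P\|_\infty\ge c\,n^{3/2}$ with high probability. Hence $\|B_P\|_\infty/\|P\|_\infty\ge c\sqrt{n/\log n}\to\infty$, and no constant of size $e\sqrt m$ (or any $n$-independent constant) can work. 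This is not an accident of the upper-triangular choice: symmetrizing $B_P$ over permutations of its arguments recovers the symmetric polar form $B$, so $\|B\|_\infty\le\|B_P\|_\infty$, and the polarization constant for $B$ on $\ell^\infty$ is already exponential in $m$; feeding a single multilinear extension of $P$ into the multilinear inequality cannot bypass this.

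The paper's proof is structured entirely differently and works on the polynomial side throughout. It first rewrites $\sum_{j\in J(m,n)}|c_j|^{2m/(m+1)}$ as a sum over $M(m,n)$ of $|c_{[i]}/|i|^{1/2}|^{2m/(m+1)}$ (so the multiplicity $|i|$ is shared out in advance), applies Blei's mixed-norm inequality (Lemma~\ref{Blei}) to peel off one index at the cost of a factor $\sqrt m$ coming from the elementary bound $|i|/|i^k|\le m$, and identifies the inner $\ell^2$ sum as the $L^2(\mu^n)$ norm of the $(m-1)$-homogeneous polynomial $z\mapsto B(z,\ldots,z,e^{(i_k)},z,\ldots,z)$. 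The hypercontractive factor $(\sqrt2)^{m-1}$ then comes from Bayart's $L^2$--$L^1$ inequality (Lemma~\ref{Bayart}, rooted in Bonami's inequality), not from a sharper polarization; and the remaining factor $(1+1/(m-1))^{m-1}$ is precisely Harris's mixed polarization estimate \eqref{harris} specialized to the partition $(m-1,1)$, for which $\tfrac{(m-1)!\,1!}{(m-1)^{m-1}1^1}\cdot\tfrac{m^m}{m!}=(m/(m-1))^{m-1}$. So the subexponential constant is obtained by \emph{avoiding} full polarization and combining Blei, Bonami--Bayart, and a one-direction Harris estimate, not by finding a good multilinear lift of $P$.
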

Before presenting the proof of this theorem, we mention some
particularly interesting consequences that serve to illustrate its
applicability and importance.

We begin with the Sidon constant $S(m,n)$ for the index set
$\{\alpha=(\alpha_1, \alpha_2,\ldots,\alpha_n): \ |\alpha|=m\}$,
which is defined in the following way. Let
\[ P(z)=\sum_{|\alpha|=m} a_\alpha z^\alpha \]
be an $m$-homogeneous polynomial in $n$ complex
variables. We set
\[\| P \|_\infty=\sup_{z\in \D^n} |P(z)| \qquad \text{and} \qquad
 \lone P \rone=\sum_{|\alpha|=m}
|a_\alpha|;\] then $S(m,n)$ is the smallest constant $C$ such that
the inequality $ \lone P\rone\le C \| P \|_\infty$ holds for every
$P$. It is plain that $S(1,n)=1$ for all $n$, and this case is
therefore excluded from our discussion. Since the dimension of the space of
$m$-homogeneous polynomials in $\C^n$ is $\binom{n+m+1}{m}$, an application of
H\"older's inequality to \eqref{maininequality} gives:

\begin{corollary} \label{sidon}
Let $m$ and $n$ be positive integers larger than $1$. Then
\begin{equation} \label{sharp} S(m,n) \le \Bigl(1+\frac 1{m-1}\Bigr)^{m-1} \sqrt
m (\sqrt 2)^{m-1}
\binom{n+m-1}{m}^{\frac{m-1}{2m}}.
\end{equation}
\end{corollary}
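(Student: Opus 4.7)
The corollary should follow by a single application of H\"older's inequality, interpolating between the $\ell^{2m/(m+1)}$-bound just provided by Theorem~\ref{mainresult} and the trivial counting bound. Writing $N:=\#\{\alpha\in\mathbb{N}_0^n:|\alpha|=m\}$ for the number of $m$-homogeneous monomials in $n$ variables, a standard stars-and-bars count gives $N=\binom{n+m-1}{m}$; this is the only place the $n$-dependent factor in \eqref{sharp} will enter.

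Given an $m$-homogeneous polynomial $P(z)=\sum_{|\alpha|=m}a_\alpha z^\alpha$, I would apply H\"older to the trivial factorization $|a_\alpha|=|a_\alpha|\cdot 1$ with conjugate exponents $p=\tfrac{2m}{m+1}$ and $q=\tfrac{2m}{m-1}$, which satisfy $\tfrac{1}{p}+\tfrac{1}{q}=1$ and are both finite precisely because $m\ge 2$. This produces
\begin{equation*}
\lone P \rone=\sum_{|\alpha|=m}|a_\alpha|\;\le\;\Bigl(\sum_{|\alpha|=m}|a_\alpha|^{\frac{2m}{m+1}}\Bigr)^{\frac{m+1}{2m}} N^{\frac{m-1}{2m}}.
\end{equation*}
Bounding the first factor by Theorem~\ref{mainresult} replaces it with $\bigl(1+\tfrac{1}{m-1}\bigr)^{m-1}\sqrt{m}(\sqrt{2})^{m-1}\|P\|_\infty$, and substituting $N=\binom{n+m-1}{m}$ yields exactly \eqref{sharp} after dividing by $\|P\|_\infty$ and taking the supremum over $P$.

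There is no substantive obstacle here: all of the analytic work has been absorbed into Theorem~\ref{mainresult}, and the corollary reduces to one line of H\"older together with a count of multi-indices. The only point that warrants a brief comment is the hypothesis $m\ge 2$: for $m=1$ the conjugate exponent $q$ becomes infinite and the H\"older step degenerates, but this case is anyway trivial since $S(1,n)=1$ as noted immediately before the corollary.
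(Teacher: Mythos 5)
Your proposal is correct and is exactly the argument the paper intends: apply H\"older with conjugate exponents $\tfrac{2m}{m+1}$ and $\tfrac{2m}{m-1}$ to pass from the $\ell^{2m/(m+1)}$-bound of Theorem~\ref{mainresult} to the $\ell^1$-sum, picking up the factor $\binom{n+m-1}{m}^{(m-1)/(2m)}$ from the count of $m$-homogeneous monomials. (Incidentally, the paper's prose just before the corollary contains a typo, writing $\binom{n+m+1}{m}$ for the dimension; your $\binom{n+m-1}{m}$ is the correct count and matches the stated inequality.)
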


Note  that  the Sidon constant $S(m,n)$ coincides with the unconditional basis
constant of the monomials $z^\alpha$ of degree $m$ in
$H^\infty(\D^n)$ which is defined as the best constant $C \geq 1$ such
that for every $m$-homogeneous polynomial $\sum_{|\alpha| = m}
a_\alpha z^\alpha$ on $\D^n$ and any choice of scalars
$\varepsilon_\alpha$ with $|\varepsilon_\alpha| \leq 1$ we have
\[
\sup_{z \in \D^n} \Bigl|\sum_{|\alpha| = m} \varepsilon_\alpha a_\alpha
z^\alpha\Bigr|
\leq C \sup_{z \in \D^n} \Bigl|\sum_{|\alpha| = m} a_\alpha z^\alpha\Bigr|\,.
\]
This and similar
unconditional basis constants were studied in \cite{DeDiGaMa}, where it was
established that $S(m,n)$ is bounded from above and below by $n^{\frac{m-1}2}$
times constants depending only on $m$. The more precise estimate
\begin{equation} \label{old} S(m,n) \le C^m n^{\frac{m-1}{2}},
\end{equation} with $C$ an absolute constant, can be extracted from
\cite{DefFrer06}.

Note that we also have the following trivial estimate:
\begin{equation} \label{trivial}
S(m,n) \le \sqrt{\binom{n+m-1}{m}},
\end{equation}
which is a consequence of the Cauchy--Schwarz inequality along with the fact
that the number of different monomials of degree $m$ in $n$ variables is
$\binom{n+m-1}{m}$. Comparing \eqref{sharp} and \eqref{trivial}, we see that our
estimate gives a nontrivial result only in the range $\log n >m$. Using the
Salem--Zygmund inequality for random trigonometric polynomials (see \cite[p.
68]{Kahane}), one may check that we have obtained the right value for $S(m,n)$,
up to a factor less than $c^m$ with $c<1$ an absolute constant
(for a different argument see \cite[(4.4)]{DeGarMa_BorhLoc}).

We will use our estimate for $S(m,n)$ to find the
precise asymptotic behavior of the $n$-dimen-sional Bohr radius,
which was introduced and studied by H.~Boas and D.~Khavinson
\cite{BoasKhav97}. Following \cite{BoasKhav97}, we now let $K_n$ be the largest
positive number $r$ such that all polynomials $\sum_\alpha
a_\alpha z^\alpha$ satisfy
\[
\sup_{z\in r \D^n}\sum_\alpha |a_\alpha z^\alpha| \le \sup_{z\in
\D^n}\Bigl|\sum_\alpha a_\alpha z^\alpha\Bigr|.
\]
The classical Bohr radius $K_1$ was studied and estimated by
H.~Bohr \cite{Bohr14} himself, and it was shown independently by
M.~Riesz, I.~Schur, and F.~Wiener  that $K_1=1/3$. In
\cite{BoasKhav97}, the two inequalities
\begin{equation}\label{BoasKhav}
\frac {1}{3} \sqrt{\frac{1}{n}} \le K_n \le 2 \sqrt{\frac{\log
n}{n}}
\end{equation}
were established for $n>1$. The paper of Boas and Khavinson
aroused new interest in the Bohr radius and has been a source of
inspiration for many subsequent papers. For some time (see for
instance \cite{Boas00}) it was thought that the left-hand side of
\eqref{BoasKhav} could not be improved. However, using
\eqref{old}, A.~Defant and L.~Frerick \cite{DefFrer06} showed that
\[
K_n\ge c \sqrt{\frac{\log n}{n\log\log n}}
\]
holds for some absolut constant $c >0$.

Using Corollary~\ref{sidon}, we will prove the following estimate which in view of \eqref{BoasKhav} is asymptotically
optimal.
\begin{theorem}\label{bohrradius}
The $n$-dimensional Bohr radius $K_n$ satisfies
\[
K_n\ge \gamma \sqrt {\frac{\log n}{n}}
\]
for an absolute constant $\gamma>0$.
\end{theorem}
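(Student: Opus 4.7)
The plan is to follow the standard slicing reduction to the one-variable Bohr theorem, and then to insert the Sidon-type estimate furnished by Corollary~\ref{sidon}. Fix $P=\sum_\alpha a_\alpha z^\alpha$ with $\|P\|_\infty\le 1$ and decompose it as $P=\sum_{m\ge 0} P_m$ into its $m$-homogeneous components. For every $z\in\overline{\D}^n$ the one-variable function $\phi_z(t):=P(tz)=\sum_m P_m(z)\,t^m$ is bounded by $1$ on $\D$, so the classical Bohr inequality in dimension one gives $\sum_m|P_m(z)|(1/3)^m\le 1$; separating off $P_0(z)=a_0$ yields $|P_m(z)|\le 3^m(1-|a_0|)$ for every $m\ge 1$. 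Taking the supremum in $z$ and combining with $\sum_{|\alpha|=m}|a_\alpha|\le S(m,n)\|P_m\|_\infty$ produces the master bound
\[
\sum_\alpha|a_\alpha|r^{|\alpha|}\le|a_0|+(1-|a_0|)\sum_{m\ge 1}(3r)^m S(m,n),
\]
so it will suffice to exhibit an absolute $\gamma>0$ for which $r=\gamma\sqrt{(\log n)/n}$ makes $\sum_{m\ge 1}(3r)^m S(m,n)\le 1$.

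To control that sum I would feed in Corollary~\ref{sidon} together with the elementary bound $\binom{n+m-1}{m}\le(2en/m)^m$. After routine algebra one obtains
\[
(3r)^m S(m,n)\le C\,m\,n^{-1/2}\bigl(6\gamma\sqrt{e(\log n)/m}\bigr)^m
\]
for an absolute constant $C$. The function $f(m):=(6\gamma\sqrt{e(\log n)/m})^m$ is log-concave in $m$, is maximized at $m^\ast=36\gamma^2\log n$, and attains there the value $f(m^\ast)=n^{18\gamma^2}$. A Laplace-type estimate (the half-width of the log-concave profile around $m^\ast$ is of order $\sqrt{\log n}$) then shows
\[
\sum_{m\ge 1}(3r)^m S(m,n)\le C'(\log n)^{3/2}\,n^{18\gamma^2-1/2},
\]
which for any $\gamma<1/6$ tends to $0$ with $n$ and is therefore bounded by $1$ once $n\ge n_0(\gamma)$. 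For the finitely many $n<n_0$, the Boas--Khavinson lower bound $K_n\ge 1/(3\sqrt n)$ already dominates $\gamma'\sqrt{(\log n)/n}$ for a suitable smaller $\gamma'>0$, so replacing $\gamma$ by $\min(\gamma,\gamma')$ gives the theorem uniformly in~$n$.

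The main obstacle I anticipate is the bookkeeping in the second paragraph: one must isolate the Gaussian-shaped peak of $(3r)^m S(m,n)$ around $m^\ast\asymp\gamma^2\log n$, verify that the polynomially small factor $n^{18\gamma^2-1/2}$ really absorbs the logarithmic factors produced by summing over $m$ (and by the $\sqrt m$ in the Sidon estimate), and handle separately the range $m>n$, where the binomial bound must be replaced but the summand is already a dominated geometric tail. Everything else is essentially mechanical: the slicing trick is classical, and the hypercontractive Sidon estimate coming from Corollary~\ref{sidon} is precisely the ingredient needed to lift the Defant--Frerick lower bound $\sqrt{(\log n)/(n\log\log n)}$ to the asymptotically sharp $\sqrt{(\log n)/n}$.
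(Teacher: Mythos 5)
Your proposal is correct and follows essentially the same route as the paper: decompose $P=\sum_m P_m$ into homogeneous parts, bound each $\|P_m\|_\infty$ in terms of a quantity like $1-|a_0|$, feed in the Sidon estimate of Corollary~\ref{sidon}, and sum at $r\asymp\sqrt{(\log n)/n}$. The one substitution you make is at the step controlling $\|P_m\|_\infty$: you slice to the one-variable function $t\mapsto P(tz)$ and invoke the classical one-variable Bohr theorem, obtaining $\|P_m\|_\infty\le 3^m(1-|a_0|)$, whereas the paper quotes F.~Wiener's lemma (Lemma~\ref{FWiener}), which gives $\|P_m\|_\infty\le 1-|a_0|^2$ with no exponential loss. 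Because the paper's bound is uniform in $m$, its final sum collapses to a clean geometric series with ratio $2\sqrt e\,\varepsilon$ once $r=\varepsilon\sqrt{(\log n)/n}$ is substituted; your extra $3^m$ pushes the weight of the summand out to $m^\ast\asymp\gamma^2\log n$ and forces the Laplace-type estimate you describe, which works but yields a smaller admissible $\gamma$ and requires the extra care you correctly flag for $m>n$, where $\binom{n+m-1}{m}\le(2en/m)^m$ fails (the paper uses $\binom{n+m-1}{m}\le e^m(1+n/m)^m$, which is valid for all $m$ and $n$). Switching to Wiener's lemma would streamline your second paragraph without changing anything else.
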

Combining this result with the right inequality in
\eqref{BoasKhav}, we conclude that \begin{equation} \label{bn}
K_n=b(n) \sqrt{\frac{\log n}{n}} \end{equation} with $\gamma\le
b(n)\le 2$. We will in fact obtain
\[
b(n)\ge \frac{1}{\sqrt 2}+o(1)
\]
when $n\to\infty$ as a lower estimate; see the concluding remark
of Section~4, which contains the proof of
Theorem~\ref{bohrradius}.

Using a different argument, Defant and Frerick have also computed
the right asymptotics for the Bohr radius for the unit ball in $\mathbb C^n$
with the $\ell^p$ norm. This result will be presented in the forthcoming paper
\cite{DefFrer}.

Another interesting point is that Theorem~\ref{mainresult} yields a
refined version of a striking theorem of S.~Konyagin and
H.~Queffel\'{e}c \cite[Theorem 4.3]{KQ01} on Dirichlet polynomials, a
result that was recently sharpened by R. de la Bret\`{e}che
\cite{Br08}. To state this result, we define the Sidon constant
$S(N)$ for the index set
\[ \Lambda(N)=\bigl\{ \log n:\ n \ \text{a positive integer}\ \le N\bigr\}
\]
in the following way. For a Dirichlet polynomial
\[ Q(s)=\sum_{n=1}^N a_n n^{-s},  \]
we set $\|Q\|_\infty =\sup_{t\in \RR} |Q(it)|$ and $\lone Q \rone
= \sum_{n=1}^N |a_n|$. Then $S(N)$ is the smallest constant $C$
such that the inequality $\lone Q \rone \le C \|Q\|_\infty$ holds
for every $Q$.
\begin{theorem}\label{dirichletpol} We have
\begin{equation}\label{sharpSidon} S(N)=\sqrt{N}
\exp\Bigl\{\bigl(-\frac{1}{\sqrt{2}}+o(1)\bigl)\sqrt{\log N
\log\!\log N}\Bigr\}
\end{equation}
when $N\to \infty$.
\end{theorem}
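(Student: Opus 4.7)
The plan is to combine the Bohr lift with the hypercontractive Sidon bound coming from Theorem~\ref{mainresult}, following the strategy pioneered by Konyagin--Queff{\'e}lec~\cite{KQ01} and refined by de la Bret\`{e}che~\cite{Br08}; the crucial new input is the $(\sqrt 2)^{m-1}$ factor of Theorem~\ref{mainresult}, which sharpens Queff{\'e}lec's $(2/\sqrt\pi)^{m-1}$ of~\eqref{queffelec} and turns the constant in the exponent of~\eqref{sharpSidon} into the specific value $1/\sqrt 2$. The matching lower bound is already in~\cite{KQ01,Br08} via a random-sign Dirichlet polynomial supported on integers with a prescribed number of prime factors, so the real work is on the upper bound.

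I would first apply the Bohr correspondence $p_j^{-s}\mapsto z_j$: for $Q(s)=\sum_{n\le N}a_n n^{-s}$ set $P(z)=\sum_{n\le N}a_n z^{\alpha(n)}$, where $\alpha(n)=(\alpha_j(n))_{j\ge 1}$ encodes $n=\prod_j p_j^{\alpha_j(n)}$; Kronecker's theorem gives $\|Q\|_\infty=\|P\|_\infty$. Decomposing $P=\sum_m P_m$ by homogeneity degree $m=\Omega(n)$, Cauchy's projection $P_m(z)=\int_0^1 P(e^{2\pi i\theta}z)e^{-2\pi i m\theta}\,d\theta$ yields $\|P_m\|_\infty\le\|P\|_\infty$. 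H\"older's inequality with conjugate exponents $(2m/(m+1),\,2m/(m-1))$ and then Theorem~\ref{mainresult} applied to $P_m$ give, for each $m\ge 2$,
\[
\sum_{\Omega(n)=m,\,n\le N}|a_n|\;\le\;\Bigl(1+\tfrac{1}{m-1}\Bigr)^{m-1}\sqrt m\,(\sqrt 2)^{m-1}\,N_m^{(m-1)/(2m)}\,\|Q\|_\infty,
\]
where $N_m=\#\{n\le N:\Omega(n)=m\}$. Carlson's Parseval identity $\sum|a_n|^2\le\|Q\|_\infty^2$ moreover gives the alternative estimate $\sum_{\Omega(n)=m}|a_n|\le\sqrt{N_m}\|Q\|_\infty$.

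The analytic core is to sum the minimum of these two bounds over $m$. Using the Landau--Selberg--Sath\'e asymptotic $N_m\sim N(\log\log N)^{m-1}/((m-1)!\log N)$, which is Poisson-like with parameter $\log\log N$, the right strategy is to use the Bohnenblust--Hille bound for $m$ up to some threshold $m_0$ and the trivial Cauchy--Schwarz bound (which enjoys rapid Poisson decay in the large-$m$ tail) for $m>m_0$. Optimizing the threshold $m_0\sim\sqrt{\log N/\log\log N}$ and inserting the improved $(\sqrt 2)^{m-1}$ factor from Theorem~\ref{mainresult} pins down the claimed upper bound, while the lower bound is taken directly from~\cite{KQ01,Br08}.

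The hardest step will be this optimization. Summed naively over all $m$, the Bohnenblust--Hille estimate $B_m N_m^{(m-1)/(2m)}$ is dominated by its maximum near $m\approx 2\log\log N$ and only improves on the trivial $\sqrt N$ by a sub-logarithmic factor; the subexponential factor $\exp(-(1/\sqrt 2+o(1))\sqrt{\log N\log\log N})$ only emerges once the Bohnenblust--Hille contribution for $m\le m_0$ is matched with the Poisson tail $\sum_{m>m_0}\sqrt{N_m}$ (or, equivalently, with a Rankin-type truncation of the large-$m$ sum) and one tracks the constant $1/\sqrt 2$ through the derivative-setting step.
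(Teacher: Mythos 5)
Your proposal follows essentially the same route the paper takes: the paper defers the proof to de la Bret\`eche \cite{Br08} (who in turn refines Konyagin--Queff\'elec \cite{KQ01}), merely instructing the reader to substitute Theorem~\ref{mainresult} for the weaker Queff\'elec bound and repeat the argument of \cite[Section~2.2]{Br08} verbatim. Your outline --- Bohr lift $p_j^{-s}\mapsto z_j$, decomposition by $m=\Omega(n)$, Cauchy projection to get $\|P_m\|_\infty\le\|Q\|_\infty$, H\"older with exponents $\bigl(\tfrac{2m}{m+1},\tfrac{2m}{m-1}\bigr)$ followed by the Bohnenblust--Hille bound, Cauchy--Schwarz with Carlson's identity for the tail, Sath\'e--Selberg asymptotics for $N_m$, and optimization of a threshold $m_0\asymp\sqrt{\log N/\log\log N}$ --- is a correct reconstruction of that argument, so the approach matches the paper.

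One conceptual point to fix: the improvement over \eqref{queffelec} does \emph{not} come from $(\sqrt 2)^{m-1}$ "sharpening" $(2/\sqrt\pi)^{m-1}$; in fact $\sqrt 2>2/\sqrt\pi$, so that base is larger. What Theorem~\ref{mainresult} kills is the superexponential polarization factor $\frac{m^{m/2}(m+1)^{(m+1)/2}}{2^m(m!)^{(m+1)/2m}}$ in \eqref{queffelec}, whose logarithm is of order $\tfrac{m}{2}\log m$; replacing it by the merely polynomial $\bigl(1+\tfrac1{m-1}\bigr)^{m-1}\sqrt m$ is what makes the bound hypercontractive, i.e.\ $\log D_m=O(m)$, and it is this drop from $\Theta(m\log m)$ to $\Theta(m)$ that moves the exponent constant from $-1/(2\sqrt 2)$ to $-1/\sqrt 2$. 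Since you plug in Theorem~\ref{mainresult} verbatim, this slip does not invalidate your computation, but the attribution should be corrected.
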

The inequality
\[
S(N)\ge \sqrt{N}
\exp\Bigl\{\bigl(-\frac{1}{\sqrt{2}}+o(1)\bigl)\sqrt{\log N
\log\!\log N}\Bigr\} \] was established by R. de la Bret\`{e}che
\cite{Br08} combining methods from analytic number theory with
probabilistic arguments. It was also shown in \cite{Br08} that the
inequality
\[
S(N)\le\sqrt{N} \exp\Bigl\{\bigl(-\frac{1}{2\sqrt{2}}+o(1)\bigl)\sqrt{\log N
\log\log N}\Bigr\}
\]
follows from an ingenious method developed by Konyagin and
Queff\'{e}lec in \cite{KQ01}. The same argument, using
Theorem~\ref{mainresult} instead of the weaker inequality
\eqref{queffelec}, gives \eqref{sharpSidon}. More precisely,
following Bohr, we set $z_j=p_j^{-s}$, where $p_1,p_2,...$ denote
the prime numbers ordered in the usual way, and make accordingly a
translation of Theorem~\ref{mainresult} into a statement about
Dirichlet polynomials; we then replace Lemme 2.4 in \cite{Br08} by
this version of Theorem~\ref{mainresult} and otherwise follow the
arguments in Section 2.2 of \cite{Br08} word by word.

Theorem~\ref{dirichletpol} enables us to make a nontrivial remark on Bohr's
absolute convergence problem. To this end, we recall that a theorem of Bohr
\cite{B13a} says that the abscissa of uniform convergence equals the abscissa of
boundedness and regularity for a given Dirichlet series $\sum_{n=1}^\infty a_n
n^{-s}$; the latter is the infimum of those $\sigma_0$ such that the function
represented by the Dirichlet series  is analytic and bounded in $\Re
s=\sigma>\sigma_0$. When discussing Bohnenblust and Hille's solution of Bohr's
problem, it is therefore quite natural to introduce the space $\Hi$, which
consists of those bounded analytic functions $f$ in $\C_+=\{s=\sigma+\im t:\
\sigma>0\}$ such that $f$ can be represented by an ordinary Dirichlet series
$\sum_{n=1}^\infty a_n n^{-s}$ in some half-plane.

\begin{corollary}\label{bh}
The supremum of the set of real numbers $c$ such that
\begin{equation}\label{balacalquef}
\sum_{n=1}^\infty |a_n|\, n^{-\frac{1}{2}} \exp\Bigl\{ c\sqrt{\log n\log \log
n}\Bigr\}<\infty
\end{equation}
for every $\sum_{n=1}^\infty a_n n^{-s}$ in $\Hi$ equals $1/\sqrt{2}$.
\end{corollary}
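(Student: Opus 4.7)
My plan is to derive the two halves of Corollary~\ref{bh} directly from Theorem~\ref{dirichletpol}. Set $w_n := n^{-1/2}\exp\{c\sqrt{\log n \log\log n}\}$. An elementary computation of $\tfrac{d}{dn}\log w_n = -\tfrac{1}{2n}\bigl(1 - c\sqrt{(\log\log n)/\log n}\,(1+o(1))\bigr)$ shows that $w_n$ is eventually monotone decreasing in $n$; in particular, for all sufficiently large $N$, the maximum of $1/w_n$ over $\{1,\dots,N\}$ is attained at $n=N$, the finitely many small indices where $w_n$ is non-monotone contributing only an additive constant that will be absorbed into the asymptotics below.

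\emph{Admissibility when $c<1/\sqrt 2$.} Given $f=\sum_n a_n n^{-s}\in\Hi$, I would split $\sum_n |a_n|w_n$ dyadically along blocks $I_k=(2^{k-1},2^k]$. On each such block, $w_n\leq w_{2^{k-1}}\sim 2^{-k/2}\exp(c\sqrt{k\log 2\cdot\log k})$. To bound $\sum_{n\in I_k}|a_n|$ I would combine the definition of $S(N)$ with the classical partial-sum estimate $\|P_N\|_{\Hi\to\Hi}=O(\log N)$ for ordinary Dirichlet series, yielding $\sum_{n\leq 2^k}|a_n|\leq O(k)\cdot S(2^k)\cdot\|f\|_\infty$. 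Feeding in the upper half of Theorem~\ref{dirichletpol}, the $k$th block contributes at most $O(k\|f\|_\infty)\exp\{(c-1/\sqrt 2+o(1))\sqrt{k\log 2\cdot\log k}\}$, which is summable in $k$ since $c<1/\sqrt 2$.

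\emph{Sharpness when $c>1/\sqrt 2$.} I would argue by contradiction using the closed graph theorem. If \eqref{balacalquef} held for every $f\in\Hi$, then the linear map $T\colon \Hi\to\ell^1(\mathbb{N},w)$, $Tf=(a_n(f))_n$, would be everywhere defined. Since the coefficient functionals are continuous on $\Hi$ (with $|a_n(f)|\leq\|f\|_\infty$) and coordinatewise convergence is forced in $\ell^1(\mathbb{N},w)$, the graph of $T$ is closed, hence $T$ is bounded: $\sum_n |a_n|w_n\leq M\|f\|_\infty$. Applied to almost-extremal Dirichlet polynomials $Q_N$ with $\|Q_N\|_\infty=1$ and $\|Q_N\|_1\geq\tfrac12 S(N)$, combined with $\sum_n|a_n|\leq(\max_{n\leq N}1/w_n)\sum_n|a_n|w_n\leq M/w_N$, this yields
\[\tfrac12 S(N) \leq M\sqrt{N}\exp(-c\sqrt{\log N\log\log N})\cdot(1+o(1)).\]
Comparing with the lower half of Theorem~\ref{dirichletpol} gives $\exp((c-1/\sqrt 2+o(1))\sqrt{\log N\log\log N})=O(1)$, whose left-hand side diverges as $N\to\infty$ once $c>1/\sqrt 2$, the desired contradiction.

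\emph{Main obstacle.} The only nontrivial external input is the partial-sum bound $\|P_N\|_{\Hi\to\Hi}=O(\log N)$, which is standard for ordinary Dirichlet series. Beyond that, the closed graph step is routine, and the handling of the few small $n$ on which $w_n$ is not yet monotone is harmlessly absorbed into the $o(1)$ errors; so I expect no real obstacle — the argument is essentially a direct exploitation of the sharp two-sided asymptotics recorded in Theorem~\ref{dirichletpol}.
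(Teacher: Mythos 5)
Your argument is correct, and it splits along the same line as the paper for the convergence half but takes a genuinely different route for the sharpness half. For $c<1/\sqrt 2$, both you and the paper decompose dyadically, bound the block sums via the $O(\log N)$ partial-sum estimate for $\Hi$ (the paper's Lemma~5, a result of Balasubramanian--Calado--Queff\'elec) and then invoke the upper half of Theorem~3; the arguments are essentially identical. For $c>1/\sqrt 2$, the paper goes constructive: it takes almost-extremal polynomials $Q_k$ of length $n_{2k}$, exploits the fact (from de la Bret\`eche's construction) that their coefficient mass concentrates in the top dyadic range, multiplies by rapidly decaying weights $\exp\{-\varepsilon\sqrt{\log n_{2k}\log\log n_{2k}}\}$, and sums to produce a single explicit $f\in\Hi$ for which the series diverges; this requires some care to ensure the scaled blocks do not cancel each other and relies on structural properties of the extremal polynomials beyond what is stated in Theorem~3. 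Your closed-graph argument sidesteps all of that: from the hypothetical everywhere-defined map $T\colon\Hi\to\ell^1(w)$ you get a uniform bound $\sum_n|a_n|w_n\leq M\|f\|_\infty$ for free (using only completeness of $\Hi$ and continuity of the coefficient functionals $|a_n(f)|\leq\|f\|_\infty$), and then a single evaluation on near-extremal $Q_N$'s contradicts the lower half of Theorem~3. Your route is shorter, requires no information about the internal structure of the extremal polynomials, and never needs to assemble a single bad $f$; the trade-off is that it is non-constructive, whereas the paper actually exhibits an element of $\Hi$ witnessing the divergence.
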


This result is a refinement of a theorem of R.~Balasubramanian,
B.~Calado, and H.~Queff\'{e}lec \cite[Theorem 1.2]{BCQ06}, which
implies that \eqref{balacalquef} holds for every $\sum_{n=1}^\infty a_n
n^{-s}$ in $\Hi$ if $c$ is sufficiently small. We will present the
deduction of Corollary~\ref{bh} from Theorem~\ref{dirichletpol} in
Section~\ref{last} below.

An interesting consequence of the theorem of Balasubramanian,
Calado, and Queff\'{e}lec is that the Dirichlet series of an element
in $\Hi$ converges absolutely on the vertical line $\sigma=1/2$. But
Corollary~\ref{bh} gives a lot more; it adds a level precision that
enables us to extract much more precise information about the
absolute values $|a_n|$ than what is obtained from the solution of
Bohr's absolute convergence theorem.

\section{Preliminaries on multilinear forms}
We begin by fixing some useful index sets. For two positive
integers $m$ and $n$, both assumed to be larger than $1$, we
define \[M(m,n)=\Big\{i=(i_1,\ldots,i_m):\ \ i_1,\ldots,i_m
\in\{1,\ldots,n\}\Big\}\]  and
\[J(m,n)=\Big\{j=(j_1,\ldots,j_m)\!\in {M(m,n)}:\ \  j_1\leq \cdots\leq
j_m \Big\}.\] For indices $i,j\in M(m,n)$, the notation $i\sim j$
will mean that there is a permutation $\sigma$ of the set
$\{1,2,\ldots,m\}$ such that $i_{\sigma(k)}=j_k$ for every
$k=1,\ldots,m$. For a given index $i$, we denote by $[i]$ the
equivalence class of all indices $j$ such that $i\sim j$.
Moreover, we let $|i|$ denote the cardinality of $[i]$ or in other
words the number of different indices belonging to $[i]$. Note
that for each $i \in M(m,n)$ there is a unique $j\in J(m,n)$ with
$[i]=[j]$. Given an index $i$ in $M(m,n)$, we set $i^k =
(i_1,\ldots,i_{k-1},i_{k+1}, \ldots, i_m)$, which is then an index
in $M(m-1,n)$.

The transformation of a homogeneous polynomial to a corresponding
multilinear form will play a crucial role in the proof of
Theorem~\ref{mainresult}. We denote by $B$ an $m$-multilinear form
on $\C^n$, i.e., given $m$ points $z^{(1)},\ldots,z^{(m)}$ in
$\C^n$, we set
\[
B(z^{(1)},\ldots,z^{(m)})=\sum_{i\in M(m,n)} b_i z_{i_1}^{(1)}\cdots
z_{i_m}^{(m)}.
\]
We may express the coefficients as $b_i=B(e^{(i_1)},\ldots,e^{(i_m)})$. The form
$B$ is symmetric if for every permutation $\sigma$ of the set
$\{1,2,\ldots,m\}$,
$B(z^{(1)},\ldots,z^{(m)})=B(z^{(\sigma(1))},\ldots,z^{(\sigma(m))})$. If we
restrict a symmetric multilinear form to the diagonal $P(z)=B(z,\ldots,z)$, then
we obtain a homogeneous polynomial. The converse is also true: Given a
homogeneous polynomial $P:\C^n\to \C$ of degree $m$, by polarization, we may
define the symmetric m-multilinear form $B: \C^{n}\times\cdots\times\C^n\to \C$
so that $B(z,\ldots,z)=P(z)$. In what follows, $B$ will denote the symmetric
$m$-multilinear form obtained in this way from $P$.

It will be important for us to be able to relate the norms of $P$
and $B$. It is plain that $\|P\|_\infty=\sup_{z\in \D^n} |P(z)|$ is smaller
than $\sup_{\D^n\times\cdots\times\D^n}|B|$. On the
other hand, it was proved by Harris \cite{Harris75} that we
have, for non-negative integers $m_1,\ldots,m_k$ with
$m_1+\cdots+m_k=m$,
\begin{equation}\label{harris}
|B(\underbrace{z^{(1)},\ldots,z^{(1)}}_{m_1},\ldots,\underbrace{z^{(k)},\ldots,
z^{(k)}}_{m_k})|\le \frac{m_1!\cdots m_k!}{m_1^{m_1}\cdots
m_k^{m_k}}\frac{m^m}{m!}\
\|P\|_\infty.
\end{equation}

Given an $m$-homogeneous polynomial in $n$ variables
$P(z)=\sum_{|\alpha|=m}a_\alpha z^\alpha$, we will write it as
\[
P(z)=\sum_{j\in J(m,n)} c_j z_{j_1}\cdots z_{j_m}.
\]
For every $i$ in $M(m,n)$, we set $c_{[i]}=c_j$ where $j$ is the unique element
of $J(m,n)$ with $i\sim j$. Observe that in this representation the coefficient
$b_i$ of the multilinear form $B$ associated to $P$ can be computed from its
corresponding coefficient: $b_i=c_{[i]}/|i|$.

\section{Proof of Theorem~\ref{mainresult}}\label{mainsect}

For the proof of  Theorem \ref{mainresult}, we will need two
lemmas. The first is due to R.~Blei \cite[Lemma 5.3]{Blei79}:
\begin{lemma}\label{Blei}
For all families $(c_i)_{i \in M(m,n)}$ of complex numbers, we
have
\begin{equation*}
\Big(\sum_{i \in M(m,n)} |c_i|^{\frac{2m}{m+1}}
\Big)^{\frac{m+1}{2m}} \leq \prod_{1 \leq k \leq m} \Big[\sum_{i_k
= 1}^n \Big(\sum_{i^k \in M(m-1,n)} |c_i|^2
\Big)^\frac{1}{2}\Big]^{\frac{1}{m}}.
\end{equation*}

\end{lemma}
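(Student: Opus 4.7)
The plan is to prove the inequality by induction on $m$. The base case $m=1$ is the trivial identity $\sum_i|c_i|=\sum_i|c_i|$, so I focus on the inductive step.

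For $m\ge 2$, the strategy is to peel off one variable via a carefully chosen H\"older inequality, reducing matters to an $(m-1)$-tensor problem. For each fixed tuple $i^m=(i_1,\ldots,i_{m-1})$, I would apply H\"older in the last variable $i_m$ with conjugate exponents $m+1$ and $(m+1)/m$ to the factorization $|c_i|^{2m/(m+1)} = |c_i|^{2/(m+1)}\cdot|c_i|^{2(m-1)/(m+1)}$, obtaining
\[
\sum_{i_m=1}^{n}|c_i|^{\frac{2m}{m+1}} \;\le\; \Bigl(\sum_{i_m=1}^{n}|c_i|^{2}\Bigr)^{\frac{1}{m+1}}\Bigl(\sum_{i_m=1}^{n}|c_i|^{\frac{2(m-1)}{m}}\Bigr)^{\frac{m}{m+1}}.
\]
The crucial feature is that the exponent $\frac{2(m-1)}{m}$ appearing in the second factor is precisely the Blei exponent at level $m-1$, which is what makes the induction go through.

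Next, I would sum over $i^m$ and apply H\"older once more, with conjugate exponents $\frac{m+1}{2}$ and $\frac{m+1}{m-1}$, to separate the two factors on the right. The second factor, after appropriate exponentiation, collapses to a sum of the form $\sum_i|c_i|^{2(m-1)/m}$ that can be controlled by the inductive hypothesis applied to suitable slice tensors, producing the factors $X_1^{1/m}\cdots X_{m-1}^{1/m}$. The first factor, which initially presents a mixed norm with the $\ell^1$-sum over $i^m$ outside an $\ell^2$-slice in $i_m$ (the ``wrong'' ordering relative to $X_m$), is brought to the desired form $X_m^{1/m}$ by applying Minkowski's inequality to swap the order of the $\ell^1$ and $\ell^2$ sums.

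The main obstacle is the precise bookkeeping of exponents across the cascade of H\"older, Minkowski, and inductive steps: each H\"older application must be tuned so that the residual tensor has exponents matching the Blei exponent at the reduced level, and the Minkowski swap must land on exactly the mixed norm defining $X_m$. The key arithmetic identities making this work are $\frac{2m}{m+1}=\frac{2}{m+1}+\frac{2(m-1)}{m+1}$ together with the matching of $\frac{2(m-1)}{m}$ to the $(m-1)$-level Blei exponent; any deviation in the choice of H\"older exponents produces spurious $n$-dependent factors (as one sees by the naive bound $|c_i|\le(\sum_{i^k}|c_i|^2)^{1/2}$), so the tight balance of exponents is essential.
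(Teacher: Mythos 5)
The paper does not prove this lemma; it simply cites Blei \cite[Lemma 5.3]{Blei79}, so the comparison is between your sketch and a correct proof, not the paper's. Your inductive set-up has a genuine gap at the final two steps. After your two H\"older applications the first factor is, up to an exponent,
\[
U=\sum_{i^m\in M(m-1,n)}\Bigl(\sum_{i_m=1}^n |c_i|^2\Bigr)^{1/2},
\]
whereas the target is
\[
X_m=\sum_{i_m=1}^n\Bigl(\sum_{i^m\in M(m-1,n)} |c_i|^2\Bigr)^{1/2}.
\]
These are two \emph{different} $\ell^1(\ell^2)$ mixed norms, obtained by interchanging which variable carries the $\ell^1$-sum and which carries the $\ell^2$-sum; neither dominates the other. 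Minkowski's inequality compares $\ell^1(\ell^2)$ with $\ell^2(\ell^1)$ \emph{for the same assignment of inner and outer variables}, so it cannot produce the bound $U\le X_m$ you need. In fact that bound is simply false: for $m=2$, $n=2$ and $c_{11}=c_{21}=1$, $c_{12}=c_{22}=0$, one has $U=2$ while $X_2=\sqrt{2}$.

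There is a second problem with the factor you propose to treat by induction. After the second H\"older that factor is
\[
V=\sum_{i^m}\Bigl(\sum_{i_m}|c_i|^{\frac{2(m-1)}{m}}\Bigr)^{\frac{m}{m-1}},
\]
and this does not ``collapse'' to $\sum_i|c_i|^{2(m-1)/m}$ (the outer exponent $m/(m-1)$ is not $1$). Setting $d_{i^m}=\bigl(\sum_{i_m}|c_i|^{2(m-1)/m}\bigr)^{m/(2(m-1))}$, one has $V=\sum_{i^m}d_{i^m}^2$, i.e.\ an $\ell^2$-sum in $d$, not the $\ell^{2(m-1)/m}$-sum that the $(m-1)$-level Blei inequality controls; and even if one applied the inductive hypothesis to $(d_{i^m})$, the resulting right-hand side involves $\sum_{i_k}\bigl(\sum_{(i^m)^k}d_{i^m}^2\bigr)^{1/2}$, which is \emph{not} $X_k$, because the inner $\ell^2$-average of the slice $\ell^{2(m-1)/m}$-norms in $i_m$ is not the $\ell^2$-norm over all of $i^k$. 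A correct proof needs to handle all $m$ directions simultaneously (and symmetrically), using the fact that $|c_i|^{2m/(m+1)}$ can be split into $m$ pieces, one attached to each index, each treated by a Cauchy--Schwarz in its own direction; peeling off one index at a time as you propose does not reproduce the mixed-norm structure of the $X_k$.
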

We now let $\mu^n$ denote normalized Lebesgue measure on $\mathbb
T^n$; the second lemma is a result of F.~Bayart \cite[Theorem
9]{Bayart02}, whose proof relies on an inequality first
established by A.~Bonami \cite[Th\'{e}or\`{e}me 7, Chapitre
III]{Bo}.
\begin{lemma}\label{Bayart}
For every $m$-homogeneous polynomial $P(z) =\sum\limits_{|\alpha|=m}
a_\alpha z^\alpha$ on $\C^n$, we have
\[
\big( \sum_{|\alpha| = m} |a_\alpha|^2 \big)^{\frac{1}{2}} 
\le (\sqrt{2})^m \, \Big\| \sum_{|\alpha|=m} a_\alpha
z^\alpha\Big\|_{L^1 (\mu^n)}.
\]
\end{lemma}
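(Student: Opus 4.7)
By orthonormality of the monomials $\{z^\alpha\}_{|\alpha| = m}$ in $L^2(\mu^n)$, we have $\|P\|_{L^2(\mu^n)}^2 = \sum_{|\alpha|=m} |a_\alpha|^2$, so the lemma reduces to the hypercontractive $L^1 \to L^2$ bound
$$
\|P\|_{L^2(\mu^n)} \le (\sqrt{2})^{m}\, \|P\|_{L^1(\mu^n)}.
$$
The core input is the one-variable estimate, equivalent to the classical Bonami--Weissler fact that the dilation $D_{1/\sqrt{2}}: f(z) \mapsto f(z/\sqrt{2})$ is a contraction from $H^1(\D)$ to $H^2(\D)$: for any analytic polynomial $f(z) = \sum_{k \ge 0} c_k z^k$,
$$
\Big(\sum_{k \ge 0} 2^{-k}\, |c_k|^2\Big)^{1/2} \le \int_{\mathbb T} |f(z)|\, d\mu(z). \qquad (\ast)
$$
I would take $(\ast)$ as the nontrivial external input inherited from Bonami's thesis and then induct on $n$.

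\textbf{Induction on $n$.} For $n = 1$, an $m$-homogeneous polynomial is simply $P(z) = a z^m$, and both norms equal $|a|$. For $n \ge 2$, split off the last variable:
$$
P(z_1, \dots, z_n) = \sum_{k=0}^m z_n^k\, Q_k(z_1, \dots, z_{n-1}),
$$
where each $Q_k$ is $(m - k)$-homogeneous in $n - 1$ variables. Orthogonality in $z_n$ gives $\|P\|_{L^2(\mu^n)}^2 = \sum_{k=0}^m \|Q_k\|_{L^2(\mu^{n-1})}^2$, and the inductive hypothesis applied to each $Q_k$ yields
$$
\|P\|_{L^2(\mu^n)}^2 \le \sum_{k=0}^m 2^{m-k}\, \|Q_k\|_{L^1(\mu^{n-1})}^2 = 2^m \sum_{k=0}^m 2^{-k}\, \|Q_k\|_{L^1(\mu^{n-1})}^2.
$$

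\textbf{Closing the induction.} It suffices to establish $\sum_{k=0}^m 2^{-k}\, \|Q_k\|_{L^1(\mu^{n-1})}^2 \le \|P\|_{L^1(\mu^n)}^2$. Applying $(\ast)$ pointwise in $z' \in \mathbb T^{n-1}$ with $c_k := Q_k(z')$ gives
$$
\Big(\sum_{k=0}^m 2^{-k}\, |Q_k(z')|^2\Big)^{1/2} \le \int_{\mathbb T} |P(z', z_n)|\, d\mu(z_n),
$$
and Minkowski's integral inequality for the weighted $\ell^2$-norm in $k$ (with weights $2^{-k/2}$), applied after integrating in $z'$, upgrades this to the desired bound. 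The only substantive obstacle is the one-variable estimate $(\ast)$; it is the classical hypercontractive kernel and is the sole nontrivial external ingredient, while the passage from one to $n$ variables is a clean induction plus Minkowski.
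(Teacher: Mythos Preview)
Your argument is correct and is essentially Bayart's original proof, which the paper cites without reproducing; the paper states the lemma as a known result of Bayart (relying on Bonami's hypercontractivity) and also notes the alternative route via Helson and the Mateljevi\'{c}--Vukoti\'{c} sharp Hardy--Littlewood inequality, the latter being precisely your one-variable input $(\ast)$. The induction-on-$n$ scheme with Minkowski that you outline is exactly how Bayart passes from one variable to $n$, so there is nothing to compare beyond the fact that you have supplied the proof the paper omits.
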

We note also that
Lemma~\ref{Bayart} is a special case of a variant of Bayart's
theorem found in \cite{Helson}, relying on an inequality in
D.~Vukotic's paper \cite{Vukotic}. The latter inequality, giving the
best constant in an inequality of Hardy and Littlewood, had appeared
earlier in a paper of M.~Mateljevi\'{c} \cite{Ma80}.

\begin{proof}[Proof of Theorem~\ref{mainresult}]
We write  the homogeneous polynomial $P$ as
\[P(z)=\sum_{j\in J(m,n)}c_jz_{j_1}\cdots z_{j_m}.\]
We now get
\[
\sum_{j\in J(m,n)}|c_j|^{\frac{2m}{m+1}}= \sum_{i\in M(m,n)}
|i|^{-\frac1{m+1}}\Bigl(\frac{|c_{[i]}|}{|i|^{\frac12}}\Bigr)^{\frac{2m}{m+1}}
\le\sum_{i\in M(m,n)}
\Bigl(\frac{|c_{[i]}|}{|i|^{\frac12}}\Bigr)^{\frac{2m}{m+1}}.
\]
Using Lemma~\ref{Blei} and the estimate $|i|/|i_k|\le m$, we
therefore obtain
\[
\begin{split}
\Bigl(\sum_{j\in
J(m,n)}|c_j|^{\frac{2m}{m+1}}\Bigr)^{\frac{m+1}{2m}} & \le \
\prod_{k=1}^m\ \Bigl[\sum_{i_k=1}^m \Bigl(\sum_{i^k\in
M(m-1,n)}\frac{|c_{[i]}|^2}{|i|}\Bigr)^{\frac12} \Bigr]^{\frac1m}
\\ & \le \ \sqrt{m} \prod_{k=1}^m \ \Bigl[\sum_{i_k=1}^n
\Bigl(\sum_{i^k\in
M(m-1,n)}|i^k|\frac{|c_{[i]}|^2}{|i|^2}\Bigr)^{\frac12}
\Bigr]^{\frac1m}.
\end{split}
\]
Thus it suffices to prove that
\begin{equation}\label{suffices}
\sum_{i_k=1}^n \Bigl(\sum_{i^k\in
M(m-1,n)}|i^k|\frac{|c_{[i]}|^2}{|i|^2}\Bigr)^{\frac12} \le \Bigl(1+\frac
1{m-1}\Bigr)^{m-1}
(\sqrt{2})^{m-1} \|P\|_\infty
\end{equation}
for $k=1,2,\ldots,m$.

 We observe that if we
write $P_k(z)=B(z,\ldots,z,e^{(i_k)},z,\ldots,z)$, then we have
\[
 \Bigl(\sum_{i^k\in M(m-1,n)}|i^k|\frac{|c_{[i]}|^2}{|i|^2}\Bigr)^{\frac12}=
\Bigl(\sum_{i^k\in M(m-1,n)} |i^k|
|b_i|^2\Bigr)^{\frac12}=\|P_k\|_{2}.
\]
Hence, applying Lemma~\ref{Bayart} to $P_k$, we get
\[
 \Bigl(\sum_{i^k\in M(m-1,n)}|i^k|\frac{|c_{[i]}|^2}{|i|^2}\Bigr)^{\frac12}
\le (\sqrt{2})^{m-1} \int_{\mathbb T^n}
|B(z,\ldots,z,e^{(i_k)},z,\ldots,z)| \ d\mu^n(z).
\]
It is clear that we may replace $e^{(i_k)}$ by $\lambda_{i_k}(z)
e^{(i_k)}$ with $\lambda_{i_k}(z)$ any point on the unit circle. If
we choose $\lambda_{i_k}(z)$ such that $B(z,\ldots,z,
\lambda_{i_k}(z)e^{(i_k)},z,\ldots,z) >0$ and write
$\tau_k(z)=\sum_{i_k=1}^n \lambda_{i_k}(z) e^{(i_k)}$, then we
obtain
\[
\sum_{i_k=1}^n \Bigl(\sum_{i^k\in
M(m-1,n)}|i^k|\frac{|c_{[i]}|^2}{|i|^2}\Bigr)^{\frac12}
\le(\sqrt{2})^{m-1}
 \int_{\mathbb T^n}
B(z,\ldots,z,\tau_k(z),z,\ldots,z)\ d\mu(z).
\]
We finally arrive at \eqref{suffices} by applying \eqref{harris} to the
right-hand side of this inequality.
\end{proof}

\section{Proof of Theorem~\ref{bohrradius}}\label{sectradius}

We now turn to multidimensional Bohr radii. In
\cite[Theorem~2.2]{DeGarMa_BorhLoc}, a basic link between Bohr
radii and unconditional basis constants was given. Indeed,  we
have
\[
  \frac{1}{3\sup_m \sqrt[m]{C_m}}\,
  \le K_n \le \min \Bigl(\frac{1}{3}, \; \frac{1}{\sup_m \sqrt[m]{C_m}}\Bigr)\,,
\]
where  $C_m$ is the
unconditional basis constant of the monomials of degree $m$ in $H^\infty(\D^n)$.
Thus the estimates
for unconditional basis constants for $m$-homoge\-neous polynomials
always lead to estimates for multidimensional Bohr radii.

We still choose to present a direct proof of
Theorem~\ref{bohrradius}, as this leads to a better estimate on the
asymptotics of the quantity $b(n)$ in \eqref{bn}. We need the
following lemma of F.~Wiener (see \cite{BoasKhav97}).
\begin{lemma}\label{FWiener}
Let $P$ be a polynomial in $n$ variables and $P=\sum_{m\ge 0} P_m$
its expansion in homogeneous polynomials. If $\|P\|_\infty \le 1$,
then $\|P_m\|_\infty \le 1-|P_0|^2$ for every $m>0$.
\end{lemma}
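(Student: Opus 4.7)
The plan is to reduce the statement to a one-variable Schwarz-type inequality. Fix any $z\in\overline{\D^n}$ and consider the single-variable polynomial $\phi_z(\lambda)=P(\lambda z)$. Since $\lambda z\in\overline{\D^n}$ for $|\lambda|\le 1$, we have $\|\phi_z\|_{H^\infty(\D)}\le\|P\|_\infty\le 1$, and expanding in homogeneous parts gives
\[
\phi_z(\lambda)=P_0+\sum_{m\ge 1}P_m(z)\,\lambda^m.
\]
Recalling that $P_0$ is a constant, the lemma follows by taking the supremum over $z$ once we establish the following one-variable statement: every $f\in H^\infty(\D)$ with $f(\lambda)=a_0+\sum_{m\ge 1}a_m\lambda^m$ and $\|f\|_\infty\le 1$ satisfies $|a_m|\le 1-|a_0|^2$ for all $m\ge 1$.

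For the one-variable inequality, the first step is a reduction from general $m$ to $m=1$ via a root-of-unity averaging trick. Letting $\omega=e^{2\pi\im/m}$, I would form
\[
f_m(\lambda)=\frac{1}{m}\sum_{j=0}^{m-1}f(\omega^j\lambda)=a_0+a_m\lambda^m+a_{2m}\lambda^{2m}+\cdots .
\]
As a convex combination of functions in the unit ball of $H^\infty(\D)$, $f_m$ still has $\|f_m\|_\infty\le 1$, and since it depends on $\lambda$ only through $\mu=\lambda^m$, it factors as $f_m(\lambda)=F(\mu)$ for some $F\in H^\infty(\D)$ with $\|F\|_\infty\le 1$, $F(0)=a_0$, and $F'(0)=a_m$.

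The second step is to handle the $m=1$ case for $F$ by a Möbius-and-Schwarz argument. Assuming $|a_0|<1$, the composition
\[
g(\mu)=\frac{F(\mu)-a_0}{1-\overline{a_0}F(\mu)}
\]
is a self-map of $\D$ with $g(0)=0$, so Schwarz's lemma gives $|g'(0)|\le 1$. A direct calculation yields $g'(0)=F'(0)/(1-|a_0|^2)=a_m/(1-|a_0|^2)$, whence $|a_m|\le 1-|a_0|^2$. The degenerate case $|a_0|=1$ forces $f\equiv a_0$ by the maximum principle, giving $a_m=0=1-|a_0|^2$.

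The step I expect to be the main obstacle is bridging from the first to the $m$-th Taylor coefficient, since Schwarz's lemma applied to a normalized Schur function only controls its derivative at the origin. The averaging over $m$-th roots of unity, followed by the substitution $\mu=\lambda^m$, is precisely what converts the $m$-th Taylor coefficient of $f$ into the value of the derivative at $0$ of a new Schur function $F$, after which the standard Schwarz--Pick computation finishes the argument.
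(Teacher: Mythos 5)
Your proof is correct and complete. The paper itself does not prove Lemma~\ref{FWiener}; it simply cites \cite{BoasKhav97} for this classical result of F.~Wiener, so there is no in-paper argument to compare against. Your route — slice to one variable via $\phi_z(\lambda)=P(\lambda z)$, reduce the $m$-th coefficient estimate to a first-derivative estimate by averaging over $m$-th roots of unity and substituting $\mu=\lambda^m$, then apply the Schwarz lemma to the Möbius-normalized function — is the standard proof of Wiener's inequality and is essentially the one given in the cited reference. All steps check out: the convex-combination argument preserves the $H^\infty$ bound, the factorization $f_m(\lambda)=F(\lambda^m)$ with $\|F\|_\infty\le 1$ is legitimate, the computation $g'(0)=a_m/(1-|a_0|^2)$ is correct, and the degenerate case $|a_0|=1$ is handled properly via the maximum principle.
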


 \begin{proof}[Proof of Theorem~\ref{bohrradius}] We assume that $\sup_{\D^n}
\bigl|\sum a_\alpha z^\alpha \bigr|\le 1$. Observe that for all $z$ in
$r\D^n$,
\[
\sum |a_\alpha z^\alpha|\le |a_0|+\sum_{m>1} r^m\sum_{|\alpha|=m}|a_\alpha|.
\]
If we take into account the estimates
\[
\frac{(\log n)^{m}}{n}\le m!\qquad\text{and}\qquad
\binom{n+m-1}{m}\le e^m \bigr(1+\frac{n}{m}\bigl)^m,
\]
 then Corollary~\ref{sidon} and Lemma~\ref{FWiener} give
\[ \sum_{m>1} r^m\sum_{|\alpha|=m}|a_\alpha| \le \sum_{m>1} r^m
e\sqrt{m} (2\sqrt e)^m\Bigl(\frac{n}{\log n}\Bigr)^{m/2}
(1-|a_0|^2).
\]
Choosing $r\le \varepsilon \sqrt{\frac{\log n}{n}}$ with $\varepsilon$ small
enough, we obtain
\[
\sum |a_\alpha z^\alpha|\le |a_0| + (1-|a_0|^2)/2\le 1
\]
whenever $|a_0|\le 1$. Thus the theorem is proved with $\gamma=\varepsilon$.
\end{proof}

A closer examination of this proof shows that we get a better
constant if in the range $m> \log n$ we use \eqref{trivial} instead
of Corollary~\ref{sidon}. By this approach, we get
\[
b(n)\ge \frac{1}{\sqrt{2}}+o(1)
\]
when $n\to \infty.$

\section{Proof of Corollary~\ref{bh}}\label{last}

We need the following auxiliary result \cite[Lemma 1.1]{BCQ06}.

\begin{lemma}\label{queffelec}
If $f(s)=\sum_{n=1}^\infty a_n n^{-s}$ belongs to $\Hi$, then we
have
\begin{equation}\label{contour} \Bigl\|\sum_{n=1}^N a_n n^{-s}\Bigr\|_\infty\le
C  \log N \sup_{t>0} |f(\sigma+i t)|
\end{equation}
for an absolute constant $C$ and every $N\ge 2$.
\end{lemma}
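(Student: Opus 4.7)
The plan is to represent $S_N(s)=\sum_{n=1}^N a_n n^{-s}$ by a truncated Perron integral, shift the contour close to the imaginary axis, and read off the $\log N$ factor from the length of the truncation.

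Starting for $\Re s$ large (so that $\sum |a_n| n^{-\Re s}$ converges absolutely), I would apply Perron's formula
\[
S_N(s) = \frac{1}{2\pi i}\int_{\kappa - iT}^{\kappa + iT} f(s+w)\,\frac{N^w}{w}\,dw + E_T(s)
\]
for any fixed $\kappa>0$, with $E_T$ the usual truncation error. Shifting the contour to $\Re w = c := 1/\log N$ picks up the residue $f(s)$ of the integrand at $w=0$; I would then take $T=N$. The resulting identity,
\[
S_N(s) = f(s) + \frac{1}{2\pi i}\int_{c-iN}^{c+iN} f(s+w)\,\frac{N^w}{w}\,dw + E_N(s),
\]
extends analytically from $\Re s$ large to every $s$ with $\Re s > 0$, since both sides are holomorphic there (using $f\in\Hi$).

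For the main integral on the shifted contour, $|N^w|=N^c \le e$ and $|f(s+w)|\le\|f\|_\infty$ because $\Re(s+w)>0$. Therefore
\[
\left|\frac{1}{2\pi i}\int_{c-iN}^{c+iN} f(s+w)\frac{N^w}{w}\,dw\right| \le \frac{e\,\|f\|_\infty}{2\pi}\int_{-N}^{N}\frac{dt}{\sqrt{c^2+t^2}} = O(\log N)\,\|f\|_\infty,
\]
since the right-hand integral is of order $\log(N/c) = \log N + \log\log N$. Together with the residue term $|f(s)|\le\|f\|_\infty$, this gives the desired bound modulo the error $E_N$.

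To bound $E_N$ I would use the Bohr--Cauchy coefficient estimate $|a_n|\le\|f\|_\infty$ (passing $f$ to its Bohr lift, a bounded analytic function on the infinite polydisc, and invoking Cauchy's inequality there). The standard Perron tail estimate then yields
\[
|E_N(s)| \lesssim \|f\|_\infty \sum_{n\neq N} \frac{1}{n^{\Re s}}\cdot\frac{(N/n)^c}{N|\log(N/n)|} + \|f\|_\infty N^{-\Re s},
\]
which is uniformly $O(\|f\|_\infty)$ in $\Re s>0$ once one separates $n$ close to $N$ from the rest. The main obstacle is precisely this last step: the terms with $n$ very close to $N$, where $|\log(N/n)|$ is tiny. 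The cleanest remedy is to replace the sharp cutoff $\mathbf{1}_{[0,N]}$ by a smoothed indicator of transition width $N/\log N$, apply Perron to the smoothed sum (whose Mellin transform decays fast enough to avoid any truncation error), and then absorb the $O(N/\log N)$ mismatch between $S_N$ and the smoothed partial sum into the $O(\log N)\|f\|_\infty$ bound using $|a_n|\le\|f\|_\infty$ once more.
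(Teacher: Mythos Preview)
The paper does not prove this lemma; it is quoted from \cite{BCQ06}. Your contour-integration strategy is the natural one, but there is a genuine gap in the handling of the truncation error, together with a minor slip earlier: shifting from $\Re w=\kappa>0$ to $\Re w=c=1/\log N>0$ does not cross $w=0$, so no residue $f(s)$ is picked up (harmless for the final bound, but worth noting).

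The serious issue is the smoothing step. A transition width of $N/\log N$ leaves $O(N/\log N)$ integers in the transition zone, and from $|a_n|\le\|f\|_\infty$ (or even from $\sum_n|a_n|^2\le\|f\|_\infty^2$ via Cauchy--Schwarz) you can only bound the resulting mismatch by something of order at least $\sqrt{N/\log N}\,\|f\|_\infty$, which is far too large to be absorbed into $O(\log N)\,\|f\|_\infty$. The fix is to take the transition width \emph{less than $1$}: choose a smooth $\phi$ with $\phi\equiv 1$ on $[0,N]$ and $\phi\equiv 0$ on $[N+1,\infty)$, so that $\sum_n a_n n^{-s}\phi(n)=S_N(s)$ exactly and there is no mismatch at all. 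Two integrations by parts then show that the Mellin transform satisfies $|\widehat\phi(c+it)|\le C\min\bigl(N^c/|t|,\,N^{c+1}/t^2\bigr)$, whence $\int_{\RR}|\widehat\phi(c+it)|\,dt=O(\log N)$ for $c=1/\log N$. The identity $S_N(s)=\frac{1}{2\pi i}\int_{(c)}f(s+w)\widehat\phi(w)\,dw$, established first for $\Re s$ large, extends to all of $\Re s>0$ by analytic continuation since the integral converges absolutely and both sides are holomorphic there; this yields $|S_N(s)|\le C\log N\,\|f\|_\infty$ directly, with no error term to control.
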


\begin{proof}[Proof of Corollary~\ref{bh}]
For this proof, we will use the notation $n_k=2^k$. Assume first
that $c<1/\sqrt{2}$, and suppose we are given an arbitrary element
$f(s)=\sum_{n=1}^\infty a_n n^{-s}$ in $\Hi$. Then we have
\[ \sum_{n=1}^\infty |a_n|\, n^{-\frac{1}{2}} \exp\Bigl\{ c\sqrt{\log n\log \log
n}\Bigr\}\le \sum_{k=0}^\infty n_k^{-\frac{1}{2}} \exp\Bigl\{
c\sqrt{\log n_k\log \log n_k}\Bigr\} \sum_{n=1}^{n_{k+1}}|a_n|.\]
Applying Theorem~\ref{dirichletpol} and Lemma~\ref{queffelec} to
each of the sums $\sum_{n=1}^{n_{k+1}} |a_n|$, we see that the
right-hand is finite.

On the other hand, assume instead that $c>1/\sqrt{2}$. By
Theorem~\ref{dirichletpol}, we may find a positive constant $\delta$
and a sequence of Dirichlet polynomials
\[Q_k(s)=\sum_{n=1}^{n_{2k}-1} a_n^{(k)} n^{-s} \]
such that $\| Q_k\|_\infty =1$ and
\[ \sum_{n=1}^{n_{2k}-1} |a_n^{(k)}| \ge \delta
\exp\Bigl\{-c\sqrt{\log n_{2k} \log\!\log n_{2k}}\Bigr\} \] for
$k=1,2,...$. In fact, by the construction in \cite[Section
2.1]{Br08}, we may assume that \begin{equation}\label{dyad}
\sum_{n=n_{2(k-1)}}^{n_{2k}-1} |a_n^{(k)}| \ge \delta
\exp\Bigl\{-c\sqrt{\log n_{2k} \log\!\log n_{2k}}\Bigr\}
\end{equation} for $k=1,2,...$. We observe that the function
\[ f(s)=\sum_{k=1}^\infty \exp\Bigl\{-\varepsilon\sqrt{\log n_{2k}
\log\!\log n_{2k}}\Bigr\}\ Q_k(s)\] is an element in $\Hi$ for every
positive $\varepsilon$. Setting $f(s)=\sum_{n=1}^\infty a_n n^{-s}$
and assuming again that $Q_k$ has been constructed as in
\cite[Section 2.1]{Br08}, we get that
\[ \sum_{n=n_{2(k-1)}}^{n_{2k}-1} |a_n| \ge C \sum_{n=n_{2(k-1)}}^{n_{2k}-1}
|a_n^{(k)}| \] for some constant $C$ independent of $k$ and
$\varepsilon$. (Here the point is that $a_n^{(j)}$ decays
sufficiently fast when $j$ grows because $n_{2(j+1)}=4 n_{2j}$.)
Combining this estimate with \eqref{dyad}, we see that
\[ \sum_{n=1}^\infty |a_n| \exp\Bigl\{\bigl( c+\varepsilon\bigr)\sqrt{\log n\log \log
n}\Bigr\}=\infty. \] Since this can be achieved for arbitrary
$c>1/\sqrt{2}$ and $\varepsilon>0$, the result follows.
\end{proof}

\end{document}